\newtheorem{theorem}{Theorem}[section]
\newtheorem{lemma}[theorem]{Lemma}
\newtheorem{corollary}[theorem]{Corollary}
\theoremstyle{definition}
\newtheorem{definition}[theorem]{Definition}
\theoremstyle{remark}
\newtheorem{remark}[theorem]{Remark}
\numberwithin{equation}{section}
\begin{document}
\title{An It\^o-Wentzell formula for rough paths}
\author[1]{Rafael A. Castrequini\corref{cor1}}
\ead{rafael.castreq@gmail.com}

\author[2]{Pedro J. Catuogno}
\ead{pedrojc@ime.unicamp.br}

\author[3]{Alvaro E. Machado Hernandez}
\ead{alvaro 16841@hotmail.com}

\address[1]{Centro de Investigaci\'on y Modelamiento de Fen\'omenos Aleatorios, CIMFAV,
Universidad de Valpara\'iso, Valpara\'iso, Chile}

\address[2]{Departamento de Matem\'atica, IMECC,
Universidade Estadual de Campinas, 13083-859, Campinas - SP, Brasil}

\address[3]{Departamento de Matem\'atica, IMECC,
Universidade Estadual de Campinas, 13083-859, Campinas - SP, Brasil}
\cortext[cor1]{Corresponding author}
%\address{Instituto de Matem\'atica, Estat\'istica e Computa\c{c}\~ao Cient\'ifica,
%Universidade Estadual de Campinas, Campinas - SP, Brasil}
%\email{pedrojc@ime.unicamp.br}
%\subjclass[2010]{93C30; 93B05;
%14F35.}
%\keywords{Young integration, $\alpha$-H\"older, Ordinary differential equations, Partial differential equations.}

\begin{abstract}
This article shows an It\^o-Wentzell type formula adapted to rough paths with $\alpha$-H\"older regularity $\alpha \in (\frac{1}{3},\frac{1}{2}]$. We improve previous results of R. Castrequini and P. Catuogno \cite{Castrequini Catuogno} for the Young integral and C. Keller and J. Zhang \cite{Keller Zhang} for rough paths. 
\end{abstract}

\begin{keyword}
Rough paths ; It\^o-Wentzell formula 
\MSC[2010] 60L20 ; 60L90 
\end{keyword}

\maketitle

%    Information for the first author

%    Address of record for the research reported here

%\curraddr{Department of Mathematics and Statistics, Case Western
%Reserve University, Cleveland, Ohio 43403}

%    \thanks will become a 1st-page footnote.

%    Information for the second author

%    Current address

%    General info

%===============================================================================
% Introdu��o
%===============================================================================

\section{Introduction}
In 1950 K. It\^o \cite{Ito} published a formula for the derivation of a deterministic function whose argument is a solution of a stochastic differential equation, subsequently called It\^o formula.  The extension of this formula from deterministic functions to random fields of It\^o type was obtained by A. Wentzell \cite{Wentzell} in 1965.

Later several generalizations of the It\^o-Wentzell formula were obtained by several authors we mention between others the following: M. Bismut \cite{Bismut}, H. Kunita \cite{Kunita3}, B. Rozovskii \cite{Rozovskii} to the context of semimartingales and stochastic flows, N. Krylov \cite{Krylov} to Schwartz distributions,  D. Ocone and E. Pardoux \cite{Ocone Pardoux} to anticipating stochastic calculus,  R. Castrequini and P. Catuogno \cite{Castrequini Catuogno} to Young integral and R. Buckdhan, J. Ma  and  J. Zhang  \cite{Buckdhan Ma} to the pathwise functional It\^ o calculus.

In 1998 T. Lyons \cite{Lyons} introduced the rough path theory. It is a generalization of the ordinary differential equation theory to the case when the equation is driven by an  irregular signal, see T. Lyons  \cite{Lyons}, M. Gubinelli \cite{Gubinelli}, P. Friz and N. Victoir \cite{Friz} and P. Friz and M. Hairer \cite{Friz Hairer}. The rough path theory is powerful enough to include in its scope the stochastic differential equations.  In 2004, M. Gubinelli \cite{Gubinelli} gave a new approach to the theory of rough paths with the introduction of  the controlled paths. The controlled paths are the natural integrands of the rough integral, see \cite{Gubinelli} and \cite{Friz Hairer}. It is important to highlight  that the space of controlled paths is a Banach space closed by integration with respect to rough paths, composition with smooth functions and solutions of rough differential equation. From this viewpoint the rough path theory is formally similar to the classical semimartingale theory.

It is natural to ask for an It\^o-Wentzel type formula in the context of rough paths. In 2016, C. Keller and J. Zhang  \cite{Keller Zhang} gave a version of the It\^o-Wentzell formula for rough paths, see formula (3.14) in \cite{Keller Zhang}. This formula applies to controlled paths given by a rough integral under restrictive hypotesis on the vector field $g$.  In this article we give a complete proof of a It\^o-Wentzell type formula in the context of rough paths in a general setting. More precisely,  Let $\alpha \in (\frac{1}{3}, \frac{1}{2}]$,  $\mathbf{X}=(X, \mathbb{X})$ be an $\alpha$-rough path and $(h, \partial_Xh)$ be a controlled path. Let $g$ to be defined by  

\[
 g(t,x) = g(0,x) + \int_0^t h(s,x)d\mathbf{X}_s.
\]
Assume that $g$ is twice differentiable in the variable $x$ and the functions  $(t, x) \mapsto  D g_t(x)$  and $(t, x) \mapsto  D^2 g_t(x)$ are continuous.
Then under certain condition to be explained in Section 4, we have that for any controlled path $(Z, \partial_XZ)$ holds the following It\^o-Wentzell formula,
\begin{eqnarray}
 g(t,Z_t) & = & g(0,Z_0) + \int_0^t h(r,Z_r)d\mathbf{X}_r + \int_0^t Dg(r,Z_r) d_{\mathbf{X}}Z_r+\nonumber \\
 & &  + \int_0^ tDh(r,Z_r)\circ \partial_X Z_rd[\mathbf{X}]_{r} + \nonumber \\ & & +\frac{1}{2}\int_0^tD^ 2g(r, Z_r)  \partial_XZ_r \otimes \partial_X Z_r d[ \mathbf{X} ]_{r}, \nonumber
\end{eqnarray}
where $[\mathbf{X}]$ is the bracket of $\mathbf{X}$.

We would like to express the strong influence on our work of H. Kunita, see \cite{Kunita1}, \cite{Kunita3} and  \cite{Kunita2}.

The plan of exposition is as follows: In Section \ref{2}  we review some of the standard facts on rough paths and controlled paths in the Gubinelli's approach, see \cite{Friz Hairer} and \cite{Gubinelli}.

In Section \ref{3}, we will be concerned with a complete proof of a chain rule for controlled paths. 

In Section \ref{4}, we prove the It\^o-Wentzell formula for rough paths and show the relation with the result of C. Keller and J. Zhang \cite{Keller Zhang}. We give 
an adaptation of  H. Kunita's result about the decomposition 
 of solutions of stochastic differential 
equations to rough differential equations, see  \cite{Kunita1}. Finally, we study the Cauchy problem
for first-order semi-linear rough differential equations via the method of characteristics, in the spirit of H. Kunita \cite{Kunita2} and N. Krylov and B. Rozovskii \cite{Krylov Rozo}. 

In Appendix, we give the proof of some technical results necessary for the proof of Theorem 4.1.
             
\section{Rough paths and Controlled paths } \label{2}

In this section we present the elements of rough path theory. We shall use freely concepts and notations of P. Friz and M. Hairer \cite{Friz Hairer}. 

Let $V$, $U$ and $W$ be Banach spaces, $\alpha\in (\frac{1}{3}, \frac{1}{2}]$ and $T>0$. We will denote by $\triangle_T$ the set $\{(s,t): 0\leq s \leq t \leq T\}$.

\begin{definition}
An $\alpha$-H\"older rough path on $V$ is a pair $\mathbf{X}=(X, \mathbb{X})$ where $X : [0,T] \rightarrow V$ and $\mathbb{X}: \
\triangle_T\rightarrow V \otimes V$ such that 
\[
\|X\|_{\alpha} :=\sup_{s , t}\frac{|X_t -X_s|}{|t-s|^{\alpha}}< \infty ~;~\| \mathbb{X} \|_{2 \alpha}:= \sup_{s,t} \frac{|\mathbb{X}_{st}|}{|t-s|^{2\alpha}} < \infty 
\] 
and satisfies the Chen's relation
\[
\mathbb{X}_{st} =\mathbb{X}_{su}+ \mathbb{X}_{ut} + X_{su} \otimes X_{ut} 
\] 
for all $0\leq s \leq u \leq t \leq T$.

We denote the set of $\alpha$-H\"older rough paths with respect to $X$ by $\mathcal{C}_X([0,T]; V)$.
\end{definition} 

Let  $\mathbf{X}=(X, \mathbb{X})$ and $\tilde{\mathbf{X}}=(\tilde{X}, \tilde{\mathbb{X}})$ be $\alpha$-H\"older rough paths. The $\alpha$-H\"older distance between $\mathbf{X}$ and $\tilde{\mathbf{X}}$ is given by
\[
\|\mathbf{X} ; \tilde{\mathbf{X}}\|_{\alpha}:=\|X-\tilde{X}\|_{\alpha} +\|\mathbb{X}-\tilde{\mathbb{X}}\|_{\alpha}.
\] 

\begin{definition}
Let $\mathbf{X}=(X, \mathbb{X})$ be an $\alpha$-rough path. The bracket of $\mathbf{X}$ is defined by
\[
[\mathbf{X}]_{st}=X_{st}\otimes X_{st}-2Sym (\mathbb{X}_{st})
\] 
where $Sym \mathbb{X}:= \frac{1}{2}(\mathbb{X}_{st} + \mathbb{X}_{st}^*)$ denote the symmetric part of $\mathbb{X}$.  Here $^*$ denotes the transpose, given by $(x\otimes y)^*=y \otimes x$ for all $x, y \in V$.

If $[\mathbf{X}]=0$ we say that $\mathbf{X}$ is a weak geometric rough path. 
\end{definition}

We observe that 
\[
\|X\|_{\infty} \leq |X_0|+ T^{\alpha} \| X\|_{\alpha} ~~;~~\| [\mathbf{X}]\|_{2\alpha} \leq \|X\|^2_{\alpha}+ 2\| \mathbb{X}\|_{2\alpha}.
\]
Let $\mathbf{X}=(X, \mathbb{X}) \in \mathcal{C}_X([0,T]; V)$ we have that there exists an canonical $\tilde{\mathbf{X}}=(X, \tilde{\mathbb{X})} \in \mathcal{C}_X([0,T]; V) $ weak geometric,  we called this rough path the geometrization of $\mathbf{X}$. It is defined by
\[
\tilde{\mathbb{X}}:= \mathbb{X} + \frac{1}{2}[\mathbf{X}].
\] 
Let 
\[
C^{\alpha}([0,T]; V)=\{ X:[0,T] \rightarrow V : \| X \|_{\alpha}<\infty \}
\]
denote the set of $\alpha$-H\"older functions from $[0,T]$ into $V$.

Let 
\[
C^{2\alpha}([0,T]^2; V)=\{ R:\triangle_T \rightarrow V : \| R \|_{2\alpha}:=\sup_{s,t} \frac{|R_{st}|}{|t-s|^{2\alpha}} < \infty \}
\]
denote the set of $2\alpha$-H\"older functions from $[0,T]^2$ into $V$.

We denote by $L(V,W)$ the space of continuous linear mappings from $V$ to $W$.  We have the following  natural operation from  $L(V, L(U,W))\times L(V,U)$ into $L(V\otimes V, U)$ given by
\[
(\psi \cdot \varphi) v\otimes v^{\prime}:=\psi (v)( \varphi(v^{\prime}))
\]
where $\psi \in L(V, L(U,W))$, $\varphi \in L(V, U)$ and $v, v^{\prime}\in V$.

For simplicity of notation, we identify $L(V,L(V,W))$ with $L(V\otimes V, W)$ via 
\[
\eta(x)(y)=\eta(x \otimes y)
\]
where $\eta \in L(V,L(V,W))$ and $x, y \in V$.
\begin{definition} Let  $X \in C^{\alpha}([0,T]; W)$. We say that a pair  $(Y, \partial_XY)$ is a controlled path with respect to $X$ if  $Y \in C^{\alpha}([0,T]; W)$,  $\partial_XY \in C^{\alpha}([0,T]; L(V,W))$ and $R^Y \in  C^{2\alpha}(\triangle_T; W)$ where
\[
R^Y_{st} :=Y_{st}-\partial_X Y_sX_{st} 
\]
for all $0\leq s \leq t \leq T$.
We said that $\partial_XY$ is a Gubinelli derivative of $Y$. 

We denoted by $\mathcal{D}_X^{2\alpha}=\mathcal{D}_X^{2\alpha}([0,T]; W)$ the space of controlled paths with respect to $X$.  
\end{definition}

We observe that $\mathcal{D}_X^{2\alpha}$ is a Banach space provided with the norm
\[
\| Y, \partial_XY\|_{\mathcal{D}_X^{2\alpha}}:= |Y_0|+ |\partial_XY_0|+ \|\partial_XY\|_{\alpha} + \| R^Y \|_{2\alpha}.
\]
To shorten notation, we write $\| Y\|_{\mathcal{D}_X^{2\alpha}}$ instead of $\| Y, \partial_XY\|_{\mathcal{D}_X^{2\alpha}}$. 

\vspace{.3in}

Following M. Gubinelli \cite{Gubinelli} we define the rough integral of a controlled path with respect to a rough path, see Theorem 4.10 of P. Friz and M. Hairer \cite{Friz Hairer}.

\begin{theorem}{\label{trp}}
Let $\mathbf{X}=(X, \mathbb{X}) \in \mathcal{C}^{\alpha}_X([0,T]; V)$, $(Y, \partial_XY) \in \mathcal{D}_X^{2\alpha}([0,T]; L(V,W))$. Then the limit
\[
\int_0^t Y_r d\mathbf{X}_r:= \lim_{|\pi| \rightarrow 0 }\sum_{[u,v]\in \pi}Y_uX_{uv} +\partial_XY_u \mathbb{X}_{uv}
\]
there exists for all $t \in [0.T]$, where the limit is taken over any sequence of partitions $\pi$ of the interval $[0,t]$  with mesh size $|\pi|$ convergent to zero. We call this limit the rough integral of the controlled path $(Y,\partial_XY)$ with respect to the rough path $\mathbf{X}$. Also holds the following estimative for all $s \leq t$,
\[
\vert \int_s^t Y_r d\mathbf{X}_r - Y_sX_{st}-\partial_XY_s \mathbb{X}_{st}\vert \leq C\lbrace    \|R^Y \|_{2\alpha} \|X\|_{\alpha} + \| \partial_XY\|_{\alpha} \|\mathbb{X}\|_{2\alpha}           \rbrace |t-s|^{3\alpha}
\]
where the constant $C$ depends only of $\alpha$.

\end{theorem}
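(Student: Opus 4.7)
The plan is to invoke the Sewing Lemma (Lemma 4.2 in Friz--Hairer), which is tailor-made for this kind of statement, applied to the natural local approximation of the integral suggested by the integrand being controlled.

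First I would define the germ $\Xi : \triangle_T \to W$ by
\[
\Xi_{st} := Y_s X_{st} + \partial_X Y_s\,\mathbb{X}_{st},
\]
so that the proposed integral is the limit of the compensated Riemann sums $\sum_{[u,v]\in\pi}\Xi_{uv}$. The Sewing Lemma reduces existence (and the stated a priori estimate) to a bound of the form $|\delta\Xi_{sut}|\lesssim |t-s|^{3\alpha}$ with exponent strictly greater than $1$, where $\delta\Xi_{sut}:=\Xi_{st}-\Xi_{su}-\Xi_{ut}$. The condition $\alpha>1/3$ is what makes $3\alpha>1$ and thereby makes the sewing machinery applicable.

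Next I would carry out the algebraic computation of $\delta\Xi$. Using additivity of $X_{st}$ together with Chen's relation $\mathbb{X}_{st}=\mathbb{X}_{su}+\mathbb{X}_{ut}+X_{su}\otimes X_{ut}$, a direct expansion yields
\[
\delta\Xi_{sut} = -\bigl(Y_{su}-\partial_X Y_s\,X_{su}\bigr)X_{ut} - (\partial_X Y)_{su}\,\mathbb{X}_{ut} = -R^Y_{su}X_{ut}-(\partial_X Y)_{su}\mathbb{X}_{ut}.
\]
Applying the H\"older seminorms to each factor gives
\[
|\delta\Xi_{sut}| \leq \bigl(\|R^Y\|_{2\alpha}\|X\|_{\alpha}+\|\partial_X Y\|_{\alpha}\|\mathbb{X}\|_{2\alpha}\bigr)|t-s|^{3\alpha},
\]
which is the required estimate with $\zeta=3\alpha>1$.

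Finally I would invoke the Sewing Lemma to produce a path $t\mapsto \mathcal{I}(\Xi)_t$, unique up to constants, characterized by $|\mathcal{I}(\Xi)_t-\mathcal{I}(\Xi)_s-\Xi_{st}|\leq C|t-s|^{3\alpha}$ with $C$ depending only on $\alpha$; the normalization $\mathcal{I}(\Xi)_0=0$ fixes the constant and we set $\int_0^t Y_r\,d\mathbf{X}_r := \mathcal{I}(\Xi)_t$. The convergence of the Riemann-type sums along any partition sequence with $|\pi|\to 0$ is then a standard consequence of the Sewing Lemma (telescoping $\mathcal{I}(\Xi)_t-\mathcal{I}(\Xi)_0$ along $\pi$ and bounding the accumulated error by $C|\pi|^{3\alpha-1}|t|\to 0$), and the a priori estimate in the statement is precisely the sewing bound with $s$ in place of $0$. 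There is no genuinely hard step here; the only point demanding care is the $\delta\Xi$ computation, where Chen's relation and the defining identity $Y_{st}=\partial_X Y_s X_{st}+R^Y_{st}$ must be assembled so that both remainder terms appear with the correct H\"older exponents summing to $3\alpha$.
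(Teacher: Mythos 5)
Your proposal is correct and is essentially the proof the paper relies on: the paper does not prove Theorem~\ref{trp} itself but cites Gubinelli and Theorem~4.10 of Friz--Hairer, whose argument is exactly your application of the sewing lemma to the germ $\Xi_{st}=Y_sX_{st}+\partial_XY_s\mathbb{X}_{st}$, with the identity $\delta\Xi_{sut}=-R^Y_{su}X_{ut}-(\partial_XY)_{su}\mathbb{X}_{ut}$ giving the $3\alpha$-bound. Your $\delta\Xi$ computation and the resulting estimate match the stated constants exactly, so nothing is missing.
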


We observe that holds the following conversion formula type It\^o-Stratonovich:
\begin{equation}\label{Ito-strato1}
\int_s^t Y_r d \tilde{\mathbf{X}}_r= \int_s^t Y_r d \mathbf{X}_r+ \frac{1}{2}\int_s^t \partial_XY_r d[ \mathbf{X}]_r
\end{equation}
for all $0 \leq s \leq t \leq T$. Here the integral with respect to $[\mathbf{X}]$ is understood in the Young sense, see M. Hairer and P. Friz  \cite{Friz Hairer} pp 61, and R. Castrequini and P. Catuogno \cite{Castrequini Catuogno}, M. Gubinelli \cite{Gubinelli}, P. Friz and N. Victoir \cite{Friz}, T. Lyons \cite{Lyons1} between others.

The next theorem establishes the integral of a controlled with respect to a controlled path, see \cite{Friz Hairer}. The proof is an easy consequence of the sewing lemma, see \cite{Friz Hairer} Lemma 4.2.

\begin{theorem}\label{teorema2}
Let $\mathbf{X}=(X, \mathbb{X}) \in \mathcal{C}^{\alpha}_X([0,T]; V)$. Let  $(Y, \partial_XY) \in \mathcal{D}_X^{2\alpha}([0,T]; L(W,U))$ and $(Z, \partial_XZ) \in \mathcal{D}_X^{2\alpha}([0,T]; W)$. Then the limit
\[
\int_0^t Y_r d_{\mathbf{X}}Z_r:= \lim_{|\pi| \rightarrow0 }\sum_{[u,v]\in \pi}Y_uZ_{uv} +\partial_XY_u \cdot \partial_XZ_u\mathbb{X}_{uv}
\]
there exists for all $t \in [0.T]$, where the limit is taken over any sequence of partitions $\pi$ of the interval $[0,t]$  with mesh size $|\pi|$ convergent to zero. We call this limit the integral of the controlled path $(Y,\partial_XY)$ with respect to the controlled path $(Z,\partial_XZ)$. Also holds the following estimative for all $s \leq t$,
\begin{eqnarray} 
\vert \int_s^t Y_r d_{\mathbf{X}}Z_r -Y_sZ_{st}-\partial_XY_s \cdot \partial_XZ_s \mathbb{X}_{st} \vert & \leq & C\lbrace    \|\partial_XY \|_{\infty} \|\partial_XZ\|_{\alpha} \|X\|^2_{\alpha} + \|Y\|_{\alpha}\|R^Z\|_{2\alpha} \nonumber \\ & & + \|R^Y\|_{2\alpha}\| \partial_XZ\|_{\infty} \|X\|_{\alpha}    +  \nonumber\\
& & + \| \partial_XY \cdot \partial_XZ\|_{\alpha} \|  \mathbb{X} \|_{2\alpha}       \rbrace |t-s|^{3\alpha} \nonumber
\end{eqnarray}
where the constant $C$ depends only of $\alpha$.
\end{theorem}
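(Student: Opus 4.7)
The plan is to apply the sewing lemma (Lemma 4.2 of \cite{Friz Hairer}) to the two-parameter germ
\[
\Xi_{st} := Y_s Z_{st} + \partial_X Y_s \cdot \partial_X Z_s\, \mathbb{X}_{st},
\]
which is precisely the expression appearing in the Riemann sum of the statement. Once a bound of the form $|\delta\Xi_{sut}|\le K|t-s|^{3\alpha}$ is established, with $K$ built out of the four summands of the right-hand side, the sewing lemma simultaneously produces the limit defining $\int_0^t Y_r \,d_{\mathbf{X}} Z_r$ and the displayed inequality, since $3\alpha>1$.

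First I compute the coboundary $\delta \Xi_{sut}=\Xi_{st}-\Xi_{su}-\Xi_{ut}$ using the telescoping identity $Z_{st}=Z_{su}+Z_{ut}$ and Chen's relation $\mathbb{X}_{st}=\mathbb{X}_{su}+\mathbb{X}_{ut}+X_{su}\otimes X_{ut}$, which gives
\[
\delta\Xi_{sut} = -Y_{su}Z_{ut} + \partial_XY_s\cdot\partial_XZ_s\,(X_{su}\otimes X_{ut}) - (\partial_XY\cdot\partial_XZ)_{su}\,\mathbb{X}_{ut}.
\]
Next I substitute the controlled-path decompositions $Y_{su}=\partial_XY_s X_{su}+R^Y_{su}$ and $Z_{ut}=\partial_XZ_u X_{ut}+R^Z_{ut}$, and split $\partial_X Z_u = \partial_X Z_s + \partial_X Z_{su}$. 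Through the identification $L(V,L(W,U))\simeq L(V\otimes V,U)$, the bilinear piece $\partial_X Y_s X_{su}(\partial_X Z_s X_{ut})$ cancels exactly the Chen contribution $\partial_XY_s\cdot\partial_XZ_s(X_{su}\otimes X_{ut})$, leaving
\[
\delta \Xi_{sut} = -\partial_X Y_s X_{su}\,\partial_X Z_{su} X_{ut} - R^Y_{su}\,\partial_X Z_u X_{ut} - Y_{su}\,R^Z_{ut} - (\partial_X Y\cdot \partial_X Z)_{su}\,\mathbb{X}_{ut}.
\]

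Each of the four residual terms factorises into Hölder increments whose exponents add up to exactly $3\alpha$, and a direct seminorm estimate yields
\begin{eqnarray*}
|\delta\Xi_{sut}| & \le & \bigl(\|\partial_XY\|_\infty\|\partial_XZ\|_\alpha\|X\|_\alpha^2 + \|R^Y\|_{2\alpha}\|\partial_XZ\|_\infty\|X\|_\alpha \\
& & \phantom{\bigl(} + \|Y\|_\alpha\|R^Z\|_{2\alpha} + \|\partial_XY\cdot\partial_XZ\|_\alpha\|\mathbb{X}\|_{2\alpha}\bigr)|t-s|^{3\alpha}.
\end{eqnarray*}
Invoking the sewing lemma then produces both convergence of the Riemann sums for every partition with mesh tending to zero and the stated estimate, with a constant $C=C(\alpha)$.

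The main obstacle is purely notational: one has to track carefully how the operation $\psi\cdot\varphi$ interacts with the identification $L(V,L(V,W))\simeq L(V\otimes V,W)$, so that the Chen correction cancels exactly against the leading bilinear part of $-Y_{su}Z_{ut}$. Once this cancellation is performed, the four surviving terms are precisely those anticipated on the right-hand side of the statement, and only routine uses of the Hölder seminorms built into the definition of $\mathcal{D}^{2\alpha}_X$ remain.
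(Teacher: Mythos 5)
Your proposal is correct and follows exactly the route the paper intends: it applies the sewing lemma of Friz--Hairer to the germ $\Xi_{st}=Y_sZ_{st}+\partial_XY_s\cdot\partial_XZ_s\,\mathbb{X}_{st}$, and your computation of $\delta\Xi_{sut}$ (cancellation of the Chen term against the leading bilinear part of $-Y_{su}Z_{ut}$, leaving four residues of order $3\alpha$ that match the four summands in the stated bound) is the standard argument the paper summarizes by ``an easy consequence of the sewing lemma.'' No gaps.
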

We observe that the integrals are controlled paths, in fact 
\begin{equation}\label{SF1}
(\int_0^{\cdot} Y_r d\mathbf{X}_r, Y) \in \mathcal{D}_X^{2\alpha}([0,T]; W) ~\mbox{ and }~(\int_0^{\cdot} Y_r d_{\mathbf{X}}Z_r, Y\partial_XZ) \in \mathcal{D}_X^{2\alpha}([0,T]; U).
\end{equation}

%Motivated by the semimartingale theory we introduce the joint bracket of controlled paths. 

%\begin{definition}
%Let $\mathbf{X}=(X, \mathbb{X}) \in \mathcal{C}^{\alpha}_X([0,T]; V)$. Let  $(Y, \partial_XY) \in \mathcal{D}_X^{2\alpha}([0,T]; L(W,U))$ and $(Z, \partial_XZ) \in \mathcal{D}_X^{2\alpha}([0,T]; W)$. Define
%\[
%[Y,Z]_{st}:=\int_s^t \partial_XY_r \cdot \partial_XZ_r d[\mathbf{X}]_r
%\]
%for all $0\leq s \leq t \leq T$.

%We call $[Y,Z]$ the joint bracket of $Y$ and $Z$. 
%\end{definition}
%We observe that $[Y,Z] \in \mathcal{C}^{2\alpha}([0,T]; U)$. 

We also have the following conversion formula type It\^o-Stratonovich  for controlled paths
\begin{equation}\label{Ito-strato2}
\int_s^tY_r d_{\tilde{\mathbf{X}}}Z_r=\int_s^tY_r d_{\mathbf{X}}Z_r + \frac{1}{2}\int_s^t \partial_XY_r \cdot \partial_XZ_r d[\mathbf{X}]_r
\end{equation}
for all $0 \leq s \leq t \leq T$.  
%Clearly, the joint bracket is bilinear,
%\[
%[\int \overline{Z} d \mathbf{X}, \int Z d \mathbf{X}]_{st}=  \int_s^t \overline{Z}_r Z_r d[\mathbf{X}]_r
%\]
%where $(Y, Y^{\prime}) \in \mathcal{D}_X^{2\alpha}([0,T]; L(V,L(U,W)))$ and $(\overline{Z},\overline{Z}%^{\prime}) \in \mathcal{D}_X^{2\alpha}([0,T]; L(V,U))$
%and 
%\[
%[\int Y d \mathbf{X},X]_{st}=  \int_s^t Y_r  d[\mathbf{X}]_r
%\]
%where $(X,Id) \in \mathcal{D}_X^{2\alpha}([0,T]; V)$ and $(Y, Y^{\prime}) \in \mathcal{D}_X^{2\alpha}([0,T]; %L(V,U))$.

%Let $\{e_a\}$ be a basis of $V$. We denote by $\{ e^a\}$ the corresponding dual basis. Define $(\hat{X}, %\hat{X}^{\prime}) \in \mathcal{D}_X^{2\alpha}([0,T]; L(V,V))$ by 
%\[
%\hat{X}=X^a e_a \otimes e^a~~; ~~\hat{X}^{\prime}(e_a)=e_a \otimes e^a.
%\]
%We call $(\hat{X}, \hat{X}^{\prime}) $ the diagonal lift of $X$. It is clear that
%\[
%[\hat{X}, X]_{st}=[\mathbf{X}]_{st}.
%\]  
\section{Chain rule for controlled paths} \label{3}
In this section we give a proof of the chain rule for controlled paths, it is a non trivial extension of Lemma 7.3 of P. Friz and M. Hairer \cite{Friz Hairer} and an improvement of the  Theorem 3.4 (i) of  C. Keller and J. Zhang  \cite{Keller Zhang}. 
\begin{lemma}\label{lemma}
Let $\alpha \in (\frac{1}{3}, \frac{1}{2}]$, $\mathbf{X}=(X, \mathbb{X}) \in \mathcal{C}^{\alpha}_X([0,T]; V)$ and $f : [0,T] \times U  \rightarrow W$ continuous and twice continuously differentiable in $U$. We assume that there exist continuous functions $ \partial_X f : [0,T]\times U \rightarrow L(V,W)$ and  $ \partial_X Df : [0,T]\times U \rightarrow L(V,L(U,W))$ such that
\begin{enumerate}
\item $(f, \partial_X f)\in C(U, \mathcal{D}_X^{2\alpha}([0,T]; W))$ and $(Df, \partial_X Df) \in C(U, \mathcal{D}_X^{2\alpha}([0,T]; L(U,W)))$,

\item  For each $t \in [0,T]$, $\partial_Xf(\cdot,t)\in C^1(U,L(V, W)$.
\end{enumerate}
Then for any $(Z, \partial_XZ) \in \mathcal{D}_X^{2\alpha}([0,T]; U)$ we have that  $t \rightarrow \eta_t=f(t,Z_t) \in  \mathcal{D}_X^{2\alpha}([0,T]; W)$ with
\begin{equation}
\partial_X\eta_t =\partial_Xf(t,Z_t)+ Df(t,Z_t)\partial_XZ_t.
\end{equation}
\end{lemma}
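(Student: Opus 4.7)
The plan is to expand the increment
\[
\eta_{st} = f(t,Z_t) - f(s,Z_s) = \bigl[f(t,Z_t)-f(s,Z_t)\bigr] + \bigl[f(s,Z_t)-f(s,Z_s)\bigr]
\]
by inserting the pivot point $(s,Z_t)$. For the first bracket, hypothesis (i) says that for each fixed spatial point $x$ the path $r\mapsto f(r,x)$ lies in $\mathcal{D}_X^{2\alpha}$, giving
\[
f(t,Z_t)-f(s,Z_t) = \partial_X f(s,Z_t)\,X_{st} + R^{f(\cdot,Z_t)}_{st}.
\]
For the second bracket, a second-order spatial Taylor expansion combined with $Z_{st}=\partial_XZ_s X_{st}+R^Z_{st}$ gives
\[
f(s,Z_t)-f(s,Z_s) = Df(s,Z_s)\,\partial_XZ_s\,X_{st} + Df(s,Z_s)\,R^Z_{st} + O(|Z_{st}|^2).
\]
Collecting the pieces and writing $\partial_X f(s,Z_t) = \partial_X f(s,Z_s) + [\partial_X f(s,Z_t)-\partial_X f(s,Z_s)]$, one reads off the claimed Gubinelli derivative $\partial_X\eta_s=\partial_X f(s,Z_s)+Df(s,Z_s)\partial_XZ_s$ and the remainder
\[
R^\eta_{st} = R^{f(\cdot,Z_t)}_{st} + \bigl[\partial_X f(s,Z_t)-\partial_X f(s,Z_s)\bigr]X_{st} + Df(s,Z_s)\,R^Z_{st} + O(|Z_{st}|^2).
\]

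The next step is to bound each of the four summands by a multiple of $|t-s|^{2\alpha}$. The Taylor error uses $|Z_{st}|\leq\|Z\|_\alpha|t-s|^\alpha$ and the continuity of $D^2f$ on the compact set $[0,T]\times Z([0,T])$. The bracket on $\partial_X f$ uses hypothesis (ii), which makes $\partial_X f(s,\cdot)$ Lipschitz on $Z([0,T])$, so this term is bounded by $C|Z_{st}||X_{st}|=O(|t-s|^{2\alpha})$. The summand with $R^Z$ uses the continuity (hence boundedness on the compact) of $Df$ and the fact that $R^Z\in C^{2\alpha}$. The critical summand $R^{f(\cdot,Z_t)}_{st}$ is controlled by hypothesis (i): continuity of $x\mapsto(f(\cdot,x),\partial_X f(\cdot,x))$ into $\mathcal{D}_X^{2\alpha}$ gives $\sup_{t\in[0,T]}\|R^{f(\cdot,Z_t)}\|_{2\alpha}<\infty$, so this term is uniformly $O(|t-s|^{2\alpha})$. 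An entirely analogous decomposition of $\partial_X\eta_t-\partial_X\eta_s$, applying hypothesis (i) now to both $f$ and $Df$ together with the $C^1$ assumption on $\partial_X f$ in $x$ and the continuity of $D^2 f$, shows that $\partial_X\eta\in C^\alpha$ and, consequently, that $\eta\in C^\alpha$. Together these estimates give $(\eta,\partial_X\eta)\in\mathcal{D}_X^{2\alpha}([0,T];W)$ with the announced Gubinelli derivative.

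The main obstacle is precisely the term $R^{f(\cdot,Z_t)}_{st}$: because the spatial evaluation point moves with $t$, one is estimating a whole family of $2\alpha$-Hölder remainders indexed by $x\in U$, and a pointwise-in-$x$ bound is insufficient to close the $2\alpha$-Hölder estimate in the time variable. The continuity hypothesis (i), asserting that $x\mapsto(f(\cdot,x),\partial_X f(\cdot,x))$ is continuous with values in $\mathcal{D}_X^{2\alpha}$, is tailor-made to produce a uniform bound along the compact trajectory $Z([0,T])$, and it is precisely this ingredient that allows the chain rule to extend beyond the time-independent Friz--Hairer setting and to relax the more restrictive hypotheses of Keller--Zhang.
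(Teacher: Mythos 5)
Your proof is correct and follows essentially the same route as the paper's: the same pivot decomposition $f(t,Z_t)-f(s,Z_t)$ plus $f(s,Z_t)-f(s,Z_s)$, the same four-term remainder, and the same use of the continuity of $x\mapsto(f(\cdot,x),\partial_Xf(\cdot,x))$ into $\mathcal{D}_X^{2\alpha}$ together with compactness of $Z([0,T])$ to get the uniform bound on $\|R^{f(\cdot,Z_t)}\|_{2\alpha}$, which you rightly identify as the key point. The only cosmetic difference is that you absorb the second-order Taylor error into an $O(|Z_{st}|^2)$ term where the paper keeps the explicit integral remainders.
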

\begin{proof}

Let us first prove that $\partial_X\eta \in \mathcal{C}^ {\alpha}([0,T]; L(V,W))$. In fact,
\begin{eqnarray}
|\partial_X \eta_t-\partial_X \eta_s| & \leq & |\partial_X f(t,Z_t)-\partial_X f(s,Z_s)|  + |Df(t,Z_t) \partial_X Z_t-\nonumber \\& &-Df(s,Z_s)\partial f(s,Z_s)\partial_XZ_s|\nonumber \\
& \leq & \sup_{(s,t,r)\in [0,T]^ 2\times [0,1]} |D\partial_X f(t, Z_s +r Z_{st})||Z_{st}| +\sup_{s\in [0,T]}\| \partial_X f(\cdot, Z_s)\|_{\alpha} \nonumber \\   & &  |t-s|^{\alpha} +(\sup_{(s,t,r)\in [0,T]^ 2\times [0,1]} |D^2f(t, Z_s + rZ_{st})||Z_{st}| +\nonumber \\& & + \sup_{s \in [0,T]}|   Df(\cdot, Z_s)||t-s|^{\alpha} )| \partial_XZ_s| + \nonumber \\& &  + \sup_{t \in [0,T]}\| Df(t,Z_t)\|_{\infty}\| \partial_X Z\|_{\alpha}|t-s|^{\alpha}.
\end{eqnarray}
Thus 
\begin{equation}
|\partial_X \eta_t-\partial_X \eta_s| \leq K_{f, Z}|t-s|^{\alpha} 
\end{equation} 
where $K_{f,Z}$ is a finite constant given by
\begin{eqnarray}
K_{f,Z} &=& \| Z\|_{\alpha}(\sup_{(s,t,r)\in [0,T]^ 2\times [0,1]} |D\partial_X f(t, Z_s +r Z_{st})| +\sup_{(s,t,r)\in [0,T]^ 2\times [0,1]} |D^2f(t, Z_s + rZ_{st})|) + \nonumber \\ & & +\sup_{s\in [0,T]}\| \partial_X f(\cdot, Z_s)\|_{\alpha} +  \sup_{s \in [0,T]}\|   Df(\cdot, Z_s)\|_{\alpha}\| \partial_X Z\|_{\infty}+ \nonumber \\
& & + \sup_{t \in [0,T]}\|   Df(t, Z_t)\|_{\infty}\| \partial_X Z\|_{\alpha}.
\end{eqnarray}

We proceed to show that $\eta_{st}- \partial_X\eta_s X_{st} \in C^ {2\alpha}([0,T]; W)$. 

Let $0 \leq s \leq t \leq T$. Then

\begin{eqnarray*}
\eta_{st} & =  & f(t,Z_t)-f(s,Z_s) \\
& = &  (f(t, Z_t) -f(t,Z_s))+(f(t, Z_s) -f(s,Z_s)) \\
& = & \partial_Xf(s,Z_t)X_{st}+ R_{st}^{f(\cdot, Z_t)}+ \int_0^1 Df(s, Z_s+ rZ_{st})\cdot Z_{st}dr \\
& = & \left(\partial_Xf(s,Z_s)+ Df(s,Z_s)\partial_XZ_s\right)X_{st}+ R^{\eta}_{st} 
\end{eqnarray*}
where
\begin{eqnarray}\label{eq1}
R^{\eta}_{st} & = & (\partial_Xf(s,Z_t)-\partial_Xf(s,Z_s))X_{st}+ R_{st}^{f(\cdot, Z_t)}+ \nonumber\\
&  & + \int_0^1 (Df(s, Z_s + rZ_{st})-Df(s, Z_s)) dr \cdot \partial_XZ_sX_{st}+ \nonumber \\
& & +  \int_0^1 Df(s, Z_s+ r Z_{st})) dr  \cdot R^Z_{st}. 
\end{eqnarray}

Now we compute the following estimatives
\begin{eqnarray}\label{eq2}
|(\partial_Xf(s,Z_t)-\partial_Xf(s,Z_s))X_{st}|  & \leq & |\partial_Xf(s,Z_t)-\partial_Xf(s,Z_s)| |X_{st}| \nonumber\\
& \leq & \sup_{(s,t,r) \in [0,T]^2\times [0,1]}|D\partial_Xf(s,Z_s +rZ_{st})||Z_{st}||X_{st}| \nonumber \\
& = & B_{f,Z} \|Z\|_{\alpha}\|X\|_{\alpha}|t-s|^ {2\alpha},
\end{eqnarray}
\begin{eqnarray}\label{eq3}
|R_{st}^{f(\cdot, Z_t)}| & \leq & \| R_{st}^{f(\cdot, Z_t)} \|_{2\alpha}|t-s|^ {2\alpha} \nonumber \\
& \leq &\sup_{t \in [0,T]} \| f(\cdot, Z_t), \partial_X f(\cdot, Z_t )   \|_{2\alpha}|t-s|^{2\alpha} \nonumber \\
& = & C_{f,Z}|t-s|^{2\alpha},
\end{eqnarray}

\begin{eqnarray}\label{eq4}
| \int_0^1 (Df(s, Z_s + rZ_{st})-Df(s, Z_s)) dr \cdot \partial_XZ_sX_{st} | & \leq & | \int_0^ 1\int_0^1D^2f(s,Z_s+r^{\prime} r Z_{st}) \nonumber \\ &  & Z_{st}r^{\prime}\partial_XZ_s X_{st}dr^{\prime}dr | \nonumber \\
& \leq & \sup_{(s,t,r,r^{\prime})\in [0,T]^2 \times [0,1]^2}|D^2f(s, Z_s+r^{\prime} r Z_{st})| \nonumber \\ 
& & \|\partial_XZ\|_{\infty}\|X\|_{\alpha} \|Z\|_{\alpha}|t-s|^ {2\alpha} \nonumber \\
& = & D_{f,Z}\|\partial_XZ\|_{\infty}\|X\|_{\alpha} \|Z\|_{\alpha}|t-s|^ {2\alpha}
\end{eqnarray}
and 
\begin{eqnarray}\label{eq5}
| \int_0^1 Df(s, Z_s+ r Z_{st})) dr  \cdot R^Z_{st} | & \leq & \sup_{(s,t,r) \in [0,T]^2\times [0,1]}|Df(s,Z_s +rZ_{st})| |R^Z_{st}| \nonumber \\
& \leq & E_{f, Z}\|R^Z\|_{\alpha}|t-s|^{2\alpha} 
\end{eqnarray}
where
\[
B_{f,Z}:=\sup_{(s,t,r) \in [0,T]^2\times [0,1]}|D\partial_Xf(s,Z_s +rZ_{st})| ~ ; ~ C_{f,Z}:=\sup_{t \in [0,T]} \| f(\cdot, Z_t), \partial_X f(\cdot, Z_t )   \|_{2\alpha}
\]

\[
 D_{f,Z}:=\sup_{(s,t,r,r^{\prime})\in [0,T]^2 \times [0,1]^2}|D^2f(s, Z_s+r^{\prime} r Z_{st})| ~ ; ~ E_{f, Z}:=\sup_{(s,t,r) \in [0,T]^2\times [0,1]}|Df(s,Z_s +rZ_{st})| 
\]
are finite from continuity of the functions and compactness of the sets envolved. 

Combining (\ref{eq1}), (\ref{eq2}), (\ref{eq3}), (\ref{eq4}) and (\ref{eq5}) we obtain 
\[
| R^{\eta}_{st} | \leq (B_{f,Z} \|Z\|_{\alpha}\|X\|_{\alpha} +C_{f,Z} +D_{f,Z}\|\partial_XZ\|_{\infty}\|X\|_{\alpha} \|Z\|_{\alpha} +  E_{f, Z}\|R^Z\|_{\alpha} )|t-s|^{2 \alpha}
\]
Thus $R^{\eta} \in C^{2\alpha}([0,T]; W)$ and the proof is complete.
\end{proof}

\begin{corollary}
Let $\alpha \in (\frac{1}{3}, \frac{1}{2}]$, $\mathbf{X}=(X, \mathbb{X}) \in \mathcal{C}^{\alpha}_X([0,T], V)$,  $f : U  \rightarrow W$ twice continuously differentiable in $U$ and $(Z, \partial_XZ) \in \mathcal{D}_X^{2\alpha}([0,T]; U)$. Then  we have that  $t \rightarrow \eta_t=f(Z_t) \in  \mathcal{D}_X^{2\alpha}([0,T]; U)$ with
\[
\partial_X\eta_t =Df(Z_t)\partial_XZ_t.
\]
\end{corollary}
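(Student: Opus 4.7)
The plan is to recognize this corollary as the time-independent specialization of Lemma 3.1, and simply supply the data needed to invoke that lemma. Concretely, I would define an auxiliary function $\tilde{f} : [0,T] \times U \to W$ by $\tilde{f}(t,x) := f(x)$, which is trivially continuous on $[0,T] \times U$ and twice continuously differentiable in $x$ with $D\tilde{f}(t,x) = Df(x)$ and $D^2\tilde{f}(t,x) = D^2 f(x)$. I would then take as candidate Gubinelli derivatives the zero maps $\partial_X \tilde{f} \equiv 0 \in L(V,W)$ and $\partial_X D\tilde{f} \equiv 0 \in L(V, L(U,W))$.

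The main step is to verify that hypotheses (1) and (2) of Lemma 3.1 hold for this choice. For (1), I need to see that for each fixed $x \in U$ the pair $(\tilde{f}(\cdot, x), \partial_X \tilde{f}(\cdot, x)) = (f(x), 0)$ is a controlled path in $\mathcal{D}_X^{2\alpha}([0,T]; W)$, and that the assignment $x \mapsto (f(x),0)$ is continuous into this Banach space. Since $f(x)$ is constant in $t$, its increments vanish identically, so $R^{f(x)}_{st} = f(x)_{st} - 0\cdot X_{st} = 0$, and the controlled-path norm is just $|f(x)|$; continuity in $x$ then follows from continuity of $f$. The same argument applied to $Df$ gives $(D\tilde{f}(\cdot, x), \partial_X D\tilde{f}(\cdot, x)) = (Df(x),0) \in \mathcal{D}_X^{2\alpha}([0,T]; L(U,W))$ with continuous dependence on $x$, using that $Df$ is continuous. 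For (2), $\partial_X \tilde{f}(\cdot, t) \equiv 0$ is certainly $C^1$ in $U$.

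With both hypotheses verified, Lemma 3.1 applies and yields $\eta_t = \tilde{f}(t, Z_t) = f(Z_t) \in \mathcal{D}_X^{2\alpha}([0,T]; W)$ with Gubinelli derivative
\[
\partial_X \eta_t = \partial_X \tilde{f}(t, Z_t) + D\tilde{f}(t, Z_t) \partial_X Z_t = 0 + Df(Z_t) \partial_X Z_t = Df(Z_t) \partial_X Z_t,
\]
which is the desired conclusion. There is no genuine obstacle here; the only thing worth being careful about is the sanity check that constant-in-$t$ paths are controlled paths with zero Gubinelli derivative and vanishing remainder, and that the hypothesis (1) really is a joint continuity in $x$ statement about the controlled-path norm rather than something stronger.
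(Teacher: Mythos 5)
Your proof is correct and is exactly the natural specialization the paper has in mind: the corollary is stated with no proof precisely because one applies Lemma~3.1 to the time-independent $\tilde f(t,x)=f(x)$ with vanishing Gubinelli derivatives $\partial_X\tilde f\equiv 0$ and $\partial_X D\tilde f\equiv 0$, verifying hypotheses (1) and (2) trivially as you do. (Incidentally, you also correct a small typo in the statement: the codomain should be $\mathcal{D}_X^{2\alpha}([0,T];W)$, not $\mathcal{D}_X^{2\alpha}([0,T];U)$.)
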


\section{The It\^o-Wentzell formula} \label{4}

The It\^o-Wentzell formula for rough paths is established by our next theorem.

\begin{theorem}\label{Ito}
Let $\alpha \in (\frac{1}{3}, \frac{1}{2}]$,  $\mathbf{X}=(X, \mathbb{X}) \in \mathcal{C}^{\alpha}_X([0,T]; V)$ and $(h, \partial_Xh)  \in C(U, \mathcal{D}_X^{2\alpha}([0,T],L(V,W)))$. We assume that   
\begin{enumerate}
\item $h: [0,T]\times U \rightarrow L(V,W)$ is continuous and twice differentiable in $U$.
\item $Dh: [0,T]\times U \rightarrow L(U,L(V,W))$ is continuous and differentiable in $U$. 
\item $(Dh, \partial_X Dh) \in C(U, \mathcal{D}_X^{2\alpha}([0,T],L(U,L(V,W))))$ 
\end{enumerate}
Let
\[
 g(t,x) = g(0,x) + \int_0^t h(s,x)d\mathbf{X}_s.
\]
Assume that $g: [0,T] \times U \rightarrow W$ is twice differentiable in $U$ and the functions  $(t, x) \mapsto  D g_t(x)$  and $(t, x) \mapsto  D^2 g_t(x)$ are continuous.
Then for any $Z \in \mathcal{D}_X^{\alpha}([0,T];U)$,
\begin{eqnarray}\label{IV0}
 g(t,Z_t) & = & g(0,Z_0) + \int_0^t h(r,Z_r)d\mathbf{X}_r + \int_0^t Dg(r,Z_r) d_{\mathbf{X}}Z_r+\nonumber \\
 & &  + \int_0^ tDh(r,Z_r)\circ \partial_X Z_rd[\mathbf{X}]_{r} + \nonumber \\ & & +\frac{1}{2}\int_0^tD^ 2g(r, Z_r)  \partial_XZ_r \otimes \partial_X Z_r d[ \mathbf{X} ]_{r}.
\end{eqnarray}

\end{theorem}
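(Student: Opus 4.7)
My plan is to evaluate $g(t,Z_t)-g(0,Z_0)$ as a telescoping sum over a partition $\pi$ of $[0,t]$ and identify the limit as $|\pi|\to 0$ with the four integrals on the right of (\ref{IV0}). First I verify that these integrals are meaningful. Formal differentiation of the defining identity for $g$ in $x$ (a step of the kind deferred to the Appendix) yields that $(Dg(\cdot,x),Dh(\cdot,x))$ is a controlled path for every $x$, i.e.\ $\partial_X Dg=Dh$. Lemma \ref{lemma} then shows that $r\mapsto h(r,Z_r)$ and $r\mapsto Dg(r,Z_r)$ belong to $\mathcal{D}_X^{2\alpha}$ with Gubinelli derivatives
\[
\partial_X h(r,Z_r)+Dh(r,Z_r)\partial_X Z_r, \qquad Dh(r,Z_r)+D^{2}g(r,Z_r)\partial_X Z_r,
\]
respectively, so the first two integrals on the right of (\ref{IV0}) are well defined by Theorems \ref{trp} and \ref{teorema2}, and the last two are classical Young integrals against the $2\alpha$-H\"older bracket $[\mathbf{X}]$.

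For each interval $[u,v]\in\pi$ I split the increment as
\[
g(v,Z_v)-g(u,Z_u)=[g(v,Z_v)-g(u,Z_v)]+[g(u,Z_v)-g(u,Z_u)].
\]
The first bracket equals $\int_u^v h(r,Z_v)\,d\mathbf{X}_r$ and Theorem \ref{trp} expands it as $h(u,Z_v)X_{uv}+\partial_X h(u,Z_v)\mathbb{X}_{uv}$ modulo $O(|v-u|^{3\alpha})$. Taylor-expanding $h(u,\cdot)$ and $\partial_X h(u,\cdot)$ about $Z_u$ and substituting $Z_{uv}=\partial_X Z_u X_{uv}+R^Z_{uv}$ reduces this to
\[
h(u,Z_u)X_{uv}+Dh(u,Z_u)\partial_X Z_u\cdot X_{uv}^{\otimes 2}+\partial_X h(u,Z_u)\mathbb{X}_{uv}+O(|v-u|^{3\alpha}).
\]
The second bracket I handle by a second-order Taylor expansion in $x$, producing
\[
Dg(u,Z_u)Z_{uv}+\tfrac{1}{2}D^{2}g(u,Z_u)\,\partial_X Z_u\otimes\partial_X Z_u\cdot X_{uv}^{\otimes 2}+O(|v-u|^{3\alpha}),
\]
with all cross terms involving $R^Z_{uv}$ or the quadratic Taylor remainder absorbed into the error since $3\alpha>1$.

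The core of the proof is then algebraic. Using
\[
X_{uv}\otimes X_{uv}=2\,\mathrm{Sym}(\mathbb{X}_{uv})+[\mathbf{X}]_{uv},
\]
I compare the expansion above with the Riemann-sum approximations given by Theorems \ref{trp} and \ref{teorema2} for the first two integrals on the right of (\ref{IV0}). The chain-rule term $Dh(u,Z_u)\partial_X Z_u$ enters the Gubinelli derivatives of both $h(\cdot,Z_\cdot)$ and $Dg(\cdot,Z_\cdot)$, and when each is paired with $\mathbb{X}_{uv}$ via the natural identifications the two contributions differ by a transposition of the tensor slots of $\mathbb{X}_{uv}$; summing them yields $Dh(u,Z_u)\partial_X Z_u\cdot 2\,\mathrm{Sym}(\mathbb{X}_{uv})$, which together with $Dh(u,Z_u)\partial_X Z_u\cdot[\mathbf{X}]_{uv}$ from the third integral reconstructs $Dh(u,Z_u)\partial_X Z_u\cdot X_{uv}^{\otimes 2}$. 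The $D^{2}g$-piece is easier: by symmetry of $D^{2}g$, its Gubinelli-derivative contribution in the second integral pairs symmetrically with $\mathbb{X}_{uv}$ and equals $D^{2}g\,\partial_X Z\otimes\partial_X Z\cdot\mathrm{Sym}(\mathbb{X}_{uv})$, which combined with $\tfrac{1}{2}D^{2}g\,\partial_X Z\otimes\partial_X Z\cdot[\mathbf{X}]_{uv}$ from the fourth integral gives $\tfrac{1}{2}D^{2}g\,\partial_X Z\otimes\partial_X Z\cdot X_{uv}^{\otimes 2}$.

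Summing over $[u,v]\in\pi$ and letting $|\pi|\to 0$, all remainders are uniformly $O(|v-u|^{3\alpha})$ with $3\alpha>1$ and hence contribute zero, while each Riemann sum converges to its corresponding integral by Theorems \ref{trp}, \ref{teorema2}, and Young integration theory. The main obstacle, and the only genuinely non-routine step, is the tensor bookkeeping of the preceding paragraph: because $\mathbb{X}_{uv}$ is not in general symmetric, one must carefully track which slot of $\mathbb{X}$ is contracted by $\partial_X Z$ in each appearance of $Dh\,\partial_X Z\cdot\mathbb{X}$ to see that the two contributions really combine into $2\,\mathrm{Sym}(\mathbb{X})$. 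Establishing a uniform bound on the second-derivative Taylor remainders over the compact image of $Z$ is a routine but necessary technical prerequisite.
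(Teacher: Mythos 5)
Your proposal is correct and follows essentially the same strategy as the paper's proof: split each increment of $g(\cdot,Z_\cdot)$ into a time part and a space part, expand both to order $|v-u|^{3\alpha}$, match the result against the compressed Riemann sums of the four integrals via $X_{uv}\otimes X_{uv}=2\,\mathrm{Sym}(\mathbb{X}_{uv})+[\mathbf{X}]_{uv}$ together with the transposition identity $\partial_X Dg=(D\partial_X g)^*$ (the paper's Appendix \ref{A3}), and pass to the partition limit using $3\alpha>1$. The only, essentially cosmetic, difference is that you freeze the space variable at the right endpoint $Z_v$ and the time at the left endpoint $u$, so the cross term $Dh(u,Z_u)\partial_XZ_u\cdot X_{uv}^{\otimes2}$ emerges from Taylor-expanding $h(u,\cdot)$ about $Z_u$ in the time part, whereas the paper freezes at $Z_s$ and $t$ and extracts the same term from the controlled-path expansion of $Dg(\cdot,Z_s)$ in time within the space part.
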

\begin{proof}
Let $0 \leq s \leq t \leq T$. Our proof starts with the following observation
\begin{equation}\label{f}
g(t,Z_t)-g(s,Z_s)= (g(t, Z_t) -g(t,Z_s))+(g(t, Z_s) -g(s,Z_s)).
\end{equation}

According to Lemma \ref{lemma} we have that  $r \rightarrow Dg(r, Z_r)$ and $r \rightarrow h(r,Z_r)=\partial_Xg(r, Z_r)$ are controlled paths with Gubinelli derivatives,
\begin{eqnarray}
\partial_X (Dg(s,Z_s))& = & (\partial_X Dg)(s, Z_s)+ D^2g(s,Z_s)\partial_XZ_s \label{G1}\\
\partial_X (h(s,Z_s))& = & \partial_X h(s, Z_s)+ Dh(s,Z_s)\partial_XZ_s. \label{G2}
\end{eqnarray}
It is clear from the definitions  that 
\begin{equation}
\partial_X (Dg(s,Z_s)) \cdot \partial_XZ_s
=  (\partial_X Dg)(s, Z_s)\cdot \partial_XZ_s+ D^2g(s,Z_s)\partial_XZ_s \otimes \partial_XZ_s. \label{G3}\
\end{equation}

We begin  studying  $g(t, Z_t) -g(t,Z_s)$ the first term of (\ref{f}). By Taylor's formula, 
\begin{equation}\label{t}
g(t, Z_t) -g(t,Z_s)=Dg(t, Z_s)Z_{st}+ \frac{1}{2}D^2g(t,Z_s)Z_{st} \otimes Z_{st} + R^0_{st}
\end{equation}
where $R^0_{st}$ is the Taylor's remainder. It is clear that $|R^0_{st}|\leq C|t-s|^{3\alpha}$. 

We claim that 
\begin{eqnarray}\label{a1} 
\frac{1}{2}D^2g(t,Z_s)Z_{st}\otimes Z_{st} &=& D^2g(s,Z_s)(\partial_XZ_s \otimes \partial_X Z_s )\mathbb{X}_{st}+ \nonumber \\ & & +\frac{1}{2}D^ 2g(s, Z_s)\partial_XZ_s \otimes \partial_X Z_s [ \mathbf{X} ]_{st}+ R^1_{st}
\end{eqnarray}
and 
\begin{equation}\label{a2}
Dg(t,Z_s)Z_{st} = Dg(s,Z_s)Z_{st} +(Dh(s,Z_s) \circ \partial_X Z_s )X_{st} \otimes X_{st} +R^2_{st}
\end{equation}
where $|R^1_{st} |, |R^2_{st}| \leq C|t-s|^{3\alpha}$. For the proofs see Appendix \ref{A1} and Appendix \ref{A2}.

Substituting (\ref{a1}) and (\ref{a2}) into (\ref{t}) we obtain
\begin{eqnarray}\label{1taylor}
g(t, Z_t) -g(t,Z_s) & = & Dg(s,Z_s)Z_{st}+ (Dh(s,Z_s) \circ \partial_X Z_s )X_{st} \otimes X_{st}  + \nonumber \\
& & + D^2g(s,Z_s)(\partial_XZ_s \otimes \partial_X Z_s ) \mathbb{X}_{st}+ \nonumber \\ & & 
+\frac{1}{2}D^ 2g(s, Z_s)\partial_XZ_s \otimes \partial_X Z_s [ \mathbf{X} ]_{st}+R^3_{st}
\end{eqnarray}
where $R^3_{st}=R^0_{st}+R^1_{st}+R^2_{st}$. Obviously, $|R^3_{st}|\leq C|t-s|^{3\alpha}$. 

The estimative of Theorem \ref{teorema2} and (\ref{G3}) yields
\begin{eqnarray}\label{int}
\int_s^tDg(r, Z_r)d_{\mathbf{X}}Z_r &=&Dg(s, Z_s)Z_{st} + \partial_X(Dg(s,Z_s))\cdot \partial_X Z_s \mathbb{X}_{st} + R^4_{st} \nonumber \\
& = &  Dg(s, Z_s)Z_{st} + ((\partial_X Dg)(s, Z_s)\cdot \partial_XZ_s+ \nonumber \\ 
& & + D^2g(s,Z_s)(\partial_XZ_s \otimes \partial_XZ_s) \mathbb{X}_{st}   + R^4_{st}
\end{eqnarray}
where $|R^4_{st}|\leq C|t-s|^{3\alpha}$.

Substituting (\ref{int}) into (\ref{1taylor}) we obtain 
\begin{eqnarray}
g(t, Z_t) -g(t,Z_s)& =& \int_s^tDg(r, Z_r)d_{\mathbf{X}}Z_r  + (Dh(s,Z_s) \circ \partial_X Z_s )X_{st} \otimes X_{st}  - \nonumber \\ &  & - ((\partial_X Dg)(s, Z_s)\cdot \partial_XZ_s)\mathbb{X}_{st}  + \nonumber \\ & & \label{ult}
+\frac{1}{2}D^ 2g(s, Z_s)\partial_XZ_s \otimes \partial_X Z_s [ \mathbf{X} ]_{st} + R^5_{st} 
\end{eqnarray}
where  $R^ 5 = R^4 -R^ 3$. 

The  commutativity between $D$ and $\partial_X$  implies that
\begin{equation}\label{com}
((\partial_X Dg)(s, Z_s)\cdot \partial_XZ_s)\mathbb{X}_{st}  =  ((D \partial_X g)(s, Z_s)\circ  \partial_XZ_s)\mathbb{X}_{st}^*,
\end{equation}
see Appendix \ref{A3} for a proof.

Combining (\ref{ult}), (\ref{com}) and   $h=\partial_X g$ we have that
\begin{eqnarray}
g(t, Z_t) -g(t,Z_s)& =& \int_s^tDg(r, Z_r)d_{\mathbf{X}}Z_r  + (Dh(s,Z_s) \circ \partial_X Z_s ) \nonumber  \\ & & X_{st} \otimes X_{st}  - (Dh(s, Z_s)\circ \partial_XZ_s)\mathbb{X}^*_{st}  + \nonumber \\ 
& & + \frac{1}{2}D^ 2g(s, Z_s)\partial_XZ_s \otimes \partial_X Z_s [ \mathbf{X} ]_{st} + R^5_{st} \label{ult1}
\end{eqnarray}

Now, we proceed to study $g(t, Z_s)-g(s,Z_s)$ the second term of (\ref{f}).  By definitions and Theorem \ref{trp},
\begin{eqnarray}
g(t, Z_s)-g(s,Z_s) & = & \int_s^t h(r, Z_s)d\mathbf{X}_r \nonumber \\
& = & h(s, Z_s)X_{st} + \partial_Xh (s,Z_s) \mathbb{X}_{st} + R^6_{st} \label{l1}
\end{eqnarray}
where $|R^6_{st}| \leq |t-s|^{3\alpha}$.

By Lemma \ref{lemma} and Theorem  \ref{trp},

\begin{eqnarray}
\int_s^t h(r, Z_r)d\mathbf{X}_r & = & h(s, Z_s)X_{st}  + \partial_X(h (s,Z_s)) \mathbb{X}_{st} + R^7_{st}\nonumber \\
& = & h(s, Z_s)X_{st} + (\partial_Xh (s,Z_s) + Dh(s, Z_s)\circ \partial_XZ_s )\mathbb{X}_{st} +\nonumber \\
& & + R^7_{st} \label{l2}
\end{eqnarray}
where $|R^7_{st}| \leq |t-s|^{3\alpha}$.

Substituting (\ref{l2}) into (\ref{l1}) yields
\begin{eqnarray}
g(t, Z_s)-g(s,Z_s) & = & \int_s^t h(r, Z_r)d\mathbf{X}_r -(Dh(s,Z_s)\circ \partial_X Z_s)\mathbb{X}_{st} + \nonumber \\
&  & + R^8_{st} \label{l3}
\end{eqnarray}
where $R_{st}^8:=R_{st}^6-R_{st}^7$. Obviously, $|R^8_{st}| \leq |t-s|^{3\alpha}$.

Finally, substituting (\ref{l3}) and (\ref{ult1}) in (\ref{f}) and recalling that 
\[
[\mathbf{X}]_{st}=X_{st}\otimes X_{st}- 2 Sym(\mathbb{X}_{st}),
\] 
we obtain that
 
\begin{eqnarray}\label{f1}
g(t,Z_t)-g(s,Z_s) &=& (g(t, Z_t) -g(t,Z_s))+(g(t, Z_s) -g(s,Z_s)) \nonumber \\
& = &  \int_s^tDg(r, Z_r)d_{\mathbf{X}}Z_r  + (Dh(s,Z_s) \circ \partial_X Z_s) X_{st} \otimes X_{st} - \nonumber \\ &  & - (Dh(s, Z_s)\circ \partial_XZ_s)\mathbb{X}^*_{st} + \frac{1}{2}D^ 2g(s, Z_s)\partial_XZ_s \otimes \partial_X Z_s [ \mathbf{X} ]_{st} \nonumber \\ 
& & +\int_s^t h(r, Z_r)d\mathbf{X}_r   -(Dh(s,Z_s)\circ \partial_X Z_s)\mathbb{X}_{st} +R^9_{st}  \nonumber \\
& = & \int_s^tDg(r, Z_r)d_{\mathbf{X}}Z_r + \int_s^t h(r, Z_r)d\mathbf{X}_r + \nonumber \\
& & +(Dh(s,Z_s)\circ \partial_X Z_s)[\mathbf{X}]_{st} + \frac{1}{2}D^ 2g(s, Z_s)\partial_XZ_s \otimes \partial_X Z_s [ \mathbf{X} ]_{st} \nonumber \\ 
& = &  + R^9_{st}
\end{eqnarray}
where $R^9_{st} =R^5_{st} + R^8_{st}$. Obviously  $R^9_{st} \leq C|t-s|^{3\alpha}$

Since $3\alpha >1$ and $R^9_{st} \leq C|t-s|^{3\alpha}$ we have that  
\[
\lim_{|\pi| \rightarrow 0 }\sum_{[u,v]\in \pi}R^9_{uv}=0
\]
where the limit is taken over any sequence of partitions $\pi$ of the interval $[0,t]$  with mesh size $|\pi|$ convergent to zero. 

By other hand,  
\[
\lim_{|\pi| \rightarrow 0 }\sum_{[u,v]\in \pi}(Dh(u,Z_u)\circ \partial_X Z_u)[\mathbf{X}]_{uv} =\int_0^t(Dh(r,Z_r)\circ \partial_X Z_r)d[\mathbf{X}]_r
\]
and 
\[
\lim_{|\pi| \rightarrow 0 }\sum_{[u,v]\in \pi}D^ 2g(u, Z_u)\partial_XZ_u \otimes \partial_X Z_u [ \mathbf{X} ]_{uv} =\int_0^tD^ 2g(r, Z_r)\partial_XZ_r \otimes \partial_X Z_r d[ \mathbf{X} ]_r \
\]
where the integrals are taken in the sense of Young. 
Thus 
\begin{eqnarray}
g(t,Z_t)& = & g(0,Z_0)+\int_0^tDg(r, Z_r)d_{\mathbf{X}}Z_r + \int_0^t h(r, Z_r)d\mathbf{X}_r + \nonumber \\
& & + \int_0^ tDh(r,Z_r)\circ \partial_X Z_r)d[\mathbf{X}]_{r} + \nonumber \\ & & + \frac{1}{2}\int_0^tD^ 2g(r, Z_r)  \partial_XZ_r \otimes \partial_X Z_r d[ \mathbf{X} ]_{r} 
\end{eqnarray}
for all $t \in [0,T]$.

\end{proof}
  
\begin{remark}
We observe that in the hypotesis of the above theorem with $\mathbf{X}=(X, \mathbb{X}) \in \mathcal{C}^{\alpha}_X([0,T]; V)$ weak geometric, holds the following formula  
\begin{equation}\label{Strato}
g(t,Z_t)  =  g(0,Z_0) + \int_0^t h(s,Z_s)d\mathbf{X}_s + \int_0^t Dg(s,Z_s) d_{\mathbf{X}}Z_s.
\end{equation}
This formula is a rough path version of the It\^o -Wentzell formula for the Stratonovich integral, see J-M. Bismut\cite{Bismut} and H. Kunita \cite{Kunita3} and \cite{Kunita2}. 

Reciprocally, we have that  (\ref{IV0}) is a consequence of (\ref{Strato}). In fact, we have that (\ref{Strato}) holds for $\tilde{\mathbf{X}}$ the geometrization of $\mathbf{X}$. Applying (\ref{Ito-strato1}) and (\ref{Ito-strato2}) into (\ref{Strato}),
\begin{eqnarray}\label{st}
g(t,Z_t)  & = & g(0,Z_0) + \int_0^t h(s,Z_s)d\mathbf{\tilde{X}}_s + \int_0^t Dg(s,Z_s) d_{\mathbf{\tilde{X}}}Z_s  \nonumber \\
 & = & (   \int_0^t h(s,Z_s)d\mathbf{X}_s   + \frac{1}{2}\int_0^ tDh(r,Z_r)\circ \partial_X Z_r)d[\mathbf{X}]_{r} ) + \nonumber \\
 & & (  \int_0^t Dg(s,Z_s) d_{\mathbf{X}}Z_s+\frac{1}{2}\int_0^t\partial_X(Dg(s,Z_s))\cdot \partial_X Z_r d[ \mathbf{X} ]_{r}   ). \nonumber \\
 & & 
\end{eqnarray}
By  Lemma 3.1 we have 
\begin{eqnarray}
\partial_X (Dg(s,Z_s)) \cdot \partial_XZ_s
& = & (\partial_X Dg)(s, Z_s)\cdot \partial_XZ_s+ D^2g(s,Z_s)\partial_XZ_s \otimes \partial_XZ_s\nonumber \\ & & \label{oooo}
\end{eqnarray}
and it follows easily from (\ref{com}) that
\begin{equation}\label{remark}
\int_0^t(\partial_X Dg)(r, Z_r)\cdot \partial_XZ_rd[\mathbf{X}]_{r}  =  \int_0^tD h(r, Z_r)\circ  \partial_XZ_rd[\mathbf{X}]_{r}.
\end{equation}
Substituting (\ref{remark}) and (\ref{oooo}) into (\ref{st}) we obtain (\ref{IV0}).
\end{remark}
\begin{remark}
We observe that the above theorem give the It\^o-Wentzell formula of C. Keller and J. Zhang (Theorem 3.4 of \cite{Keller Zhang}). In fact, let $g(t,x)$ in the hypotesis of Theorem \ref{Ito} and $Z \in \mathcal{D}_X^{\alpha}([0,T];U)$ given by
\[
Z_t = Z_0 + \int_0^t a_r d \mathbf{X}_r + \int_0^t b_r d[\mathbf{X}]_r
\]
where $(a, \partial_X a) \in \mathcal{D}_X^{\alpha}([0,T];L(V,U))$ and $b \in C^{\alpha}([0,T], L(V \otimes V , U))$. 

It is easy to check that the following substitution rule holds
\begin{equation}\label{SR4}
\int_0^ t c_rd_{\mathbf{X}}Z_r= \int_0^tc_ra_rd\mathbf{X}_r+\int_0^tc_r b_r d[\mathbf{X}]_r
\end{equation}
for all $(c, \partial_X c) \in \mathcal{D}_X^{\alpha}([0,T];L(U,W))$.

Thus applying  (\ref{IV0}) and recalling that $\partial_XZ=a$, we have that
\begin{eqnarray}
g(t,Z_t)& = & g(0,Z_0)+  \int_0^t h(x, Z_r) d \mathbf{X}_r +\int_0^t Dg(r, Z_r) d_{ \mathbf{X}} Z_r  + \nonumber \\
&  &+ \int_0^t Dh(r,Z_r)\circ \partial_X Z_rd [\mathbf{X}]_r + \frac{1}{2} \int_0^t D^2g(r, Z_r) \partial_XZ_r \otimes \partial_X Z_rd [\mathbf{X}]_r \nonumber \\
& = & g(0,Z_0)+  \int_0^t h(x, Z_r) d \mathbf{X}_r +(\int_0^t Dg(r, Z_r) a_r d\mathbf{X}_r +\int_0^t Dg(r, Z_r)  b_r d [\mathbf{X}]_r)  + \nonumber \\
&  &+ \int_0^t Dh(r,Z_r)\circ a_rd [\mathbf{X}]_r + \frac{1}{2} \int_0^t D^2g(r, Z_r) a_r \otimes a_rd [\mathbf{X}]_r \nonumber \\
& = & g(0,Z_0)+  \int_0^t (h(x, Z_r) +Dg(r, Z_r) a_r )  d \mathbf{X}_r + \nonumber \\ 
& & + \int_0^t  (  Dg(r, Z_r)b_r + Dh(r,Z_r)\circ a_r + \frac{1}{2} D^2g(r, Z_r) a_r \otimes a_r )d[ \mathbf{X}]_r. \nonumber \\
& &
\end{eqnarray}

\end{remark}
{\bf Composition of flows:} The following result is an extension of the H. Kunita formula for the composition of flows defined by
stochastic differential equations, see \cite{Kunita1}.
\begin{corollary}
 Let $Y$ and $Z$ be the flows associated with the rough differential equations 
\[
dY = g(Y) d\mathbf{X}~ \mbox{ and }~ dZ =f(Z)d\mathbf{X}
\] 
respectively. 
Then $V= Y \circ Z$ satisfies
\begin{eqnarray*}
 dV = (g(V) + Y_*f(V))d\mathbf{X} +(Dg(Z)\circ f(Z)+  D^2Y(Z)f(Z)\otimes f(Z))d[\mathbf{X}]
\end{eqnarray*}
In case that $\mathbf{X}$ is weak geometric, 
\[
 dV = g(V)d\mathbf{X} + Y_*f(V)d\mathbf{X}.
\]

\end{corollary}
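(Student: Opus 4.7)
The key idea is to view the first flow $Y_t(x)$ as a random field of the type to which Theorem \ref{Ito} applies. Concretely, define $\bar g(t,x):=Y_t(x)$ and $h(t,x):=g(Y_t(x))$. Because $Y_t(x)$ is by construction the solution of $dY=g(Y)d\mathbf{X}$ started at $x$, we have
\[
\bar g(t,x)=\bar g(0,x)+\int_0^t h(s,x)\,d\mathbf{X}_s,
\]
which is exactly the structural assumption on $g$ in Theorem \ref{Ito}. To set up the hypotheses I would invoke the smooth dependence of rough flows on the initial condition (as in the Banach-space implicit function/fixed-point argument, e.g.\ Chapter 8 of Friz--Hairer): assuming $g$ is sufficiently regular, the maps $x\mapsto \bar g(\cdot,x)$ and $x\mapsto Dh(\cdot,x)$ are continuous into $\mathcal{D}_X^{2\alpha}$ with Gubinelli derivatives obtained by differentiating the fixed-point equation in $x$, and $(t,x)\mapsto D\bar g(t,x)=DY_t(x)$ and $D^2\bar g(t,x)=D^2Y_t(x)$ are continuous. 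This matches assumptions (1)--(3) of Theorem \ref{Ito}.

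Next, observe that the second flow $Z_t$ is a controlled path: since $dZ=f(Z)d\mathbf{X}$, one has $(Z,f(Z))\in\mathcal{D}_X^{2\alpha}$ with $\partial_X Z=f(Z)$. Applying Theorem \ref{Ito} to $V_t=\bar g(t,Z_t)=Y_t(Z_t)$ yields
\begin{eqnarray*}
V_t &=& V_0+\int_0^t g(V_r)\,d\mathbf{X}_r+\int_0^t DY_r(Z_r)\,d_{\mathbf{X}}Z_r\\
&&+\int_0^t Dh(r,Z_r)\circ f(Z_r)\,d[\mathbf{X}]_r+\tfrac{1}{2}\int_0^t D^2Y_r(Z_r)f(Z_r)\otimes f(Z_r)\,d[\mathbf{X}]_r,
\end{eqnarray*}
where I already used $h(r,Z_r)=g(Y_r(Z_r))=g(V_r)$. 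The middle $d_{\mathbf{X}}Z$ integral is then reduced by the substitution rule (\ref{SR4}) of the preceding remark, applied with $a_r=f(Z_r)$ and $b_r=0$, to
\[
\int_0^t DY_r(Z_r)\,d_{\mathbf{X}}Z_r=\int_0^t DY_r(Z_r)f(Z_r)\,d\mathbf{X}_r=\int_0^t (Y_r)_*f(V_r)\,d\mathbf{X}_r,
\]
by definition of the pushforward. Combining the two $d\mathbf{X}$ integrals produces the drift $(g(V)+Y_*f(V))d\mathbf{X}$, while the two $d[\mathbf{X}]$ integrals give the bracket correction on expanding $Dh(r,Z_r)=Dg(V_r)\circ DY_r(Z_r)$. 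The weak geometric statement is immediate: $[\mathbf{X}]\equiv 0$ eliminates the last two terms.

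The main technical obstacle is verifying the regularity needed to legitimately plug $\bar g$ into Theorem \ref{Ito}: not only must $Y_t(x)$ be $C^2$ in $x$ with continuous $(t,x)$-dependence of $DY$ and $D^2Y$, but one needs $(Dh(\cdot,x),\partial_X Dh(\cdot,x))$ and $(\bar g(\cdot,x),\partial_X\bar g(\cdot,x))$ to live in $\mathcal{D}_X^{2\alpha}$ continuously in $x$. This is precisely the content of the differentiability of rough flows with respect to the initial datum; once admitted, the Gubinelli derivatives are $\partial_X\bar g(t,x)=g(Y_t(x))$ and $\partial_X Dh(t,x)=D(g(Y_t(x)))\circ DY_t(x)$, and all remaining manipulations are formal applications of Lemma \ref{lemma} and the substitution rule.
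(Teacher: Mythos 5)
Your proposal follows essentially the same route as the paper: regard the first flow as the random field $\bar g(t,x)=Y_t(x)$ with $h(t,x)=g(Y_t(x))$, apply Theorem \ref{Ito} with $\partial_X Z=f(Z)$, and use the substitution rule to turn $\int_0^t DY_r(Z_r)\,d_{\mathbf{X}}Z_r$ into $\int_0^t (Y_r)_*f(V_r)\,d\mathbf{X}_r$. You are in fact somewhat more careful than the paper, both about the flow-regularity hypotheses needed to invoke Theorem \ref{Ito} and about the bracket terms (your $\tfrac{1}{2}D^2Y_r(Z_r)$ and $Dg(V_r)\circ DY_r(Z_r)\circ f(Z_r)$ are what the theorem actually produces, whereas the corollary as printed drops the $\tfrac{1}{2}$ and abbreviates the chain-rule expansion of $Dh(r,Z_r)$ as $Dg(Z_r)$).
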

\begin{proof}
By assumption,
\[
Y_t(x) = x + \int_0^t g(Y_s(x))d\mathbf{X}_s.  
\]
Applying the It\^o-Wentzell formula, the substitution rule and the expresion for the Gubinelli derivative of the solution of a rough differential equation we obtain
\begin{eqnarray*}
V_t & = &  x + \int_0^t g(Y_r(Z_r))d\mathbf{X}_s + \int_0^t D_xY_r(Z_r)d_{\mathbf{X}}Z_r + \\
& & \int_0^ tDg(Z_r)\circ \partial_XZ_rd[\mathbf{X}]_r+\int_0^ tD^2 Y_r(Z_r) \partial_XZ_r\otimes \partial_X Z_rd[\mathbf{X}]_r \\ & =& x + \int_0^tg(V_r)d\mathbf{X}_r + \int_0^t D_xY_r(Z_r)f(Z_r)d\mathbf{X}_r +\\ 
&  & \int_0^t Dg(Z_r)\circ f(Z_r) d[\mathbf{X}]_r + \int_0^t D^2Y_r(Z_r)f(Z_r)\otimes f(Z_r)d[\mathbf{X}]_r \\
& = & x + \int_0^t(g(V_r)+ (Y_r)_* f(V_r))d\mathbf{X}_r+\\
& & \int_0^t (Dg(Z_r)\circ f(Z_r)+  D^2Y_r(Z_r)f(Z_r)\otimes f(Z_r))d[\mathbf{X}]_r .
\end{eqnarray*}
\end{proof}

{\bf Rough partial differential equations:}  Let $ \mathbf{X}=(X, \mathbb{X}) \in \mathcal{C}^{\alpha}([0,T];\mathbf{R}^n)$ be a weak geometric rough path with  $\alpha\in(\frac{1}{3},\frac{1}{2}]$,  $\phi : \mathbf{R}^d \rightarrow \mathbf{R}$ and $F :  \mathbf{R}^d \times \mathbf{R} \times \mathbf{R}^d \rightarrow L(\mathbf{R}^n,\mathbf{R})$ given by $F(x,u,v)(w_1,\cdots, w_n )=\sum_{i=1}^ nF^j(x,u,v)w_j$. We will consider the following first order rough differential equation

\begin{equation}\label{transport}
 \left \{
\begin{array}{lll}
    du(t,x) = F ( x, u(t,x) ,D_x u(t,x) )d\mathbf{X}_t\\
 u(0,x) = \phi(x).
\end{array}
\right .
\end{equation}
We assume that   
$F^j  \in C^{3}(\mathbf{R}^{2d+1})$ for $j =1 , \cdots , n$.
  
\begin{definition}
Let $\phi \in C^1( \mathbf{R}^d)$. We say that a function $u: [0,T] \times \mathbf{R}^d \rightarrow \mathbf{R}$  is a solution of  (\ref{transport}) with the initial condition $u_0 = \phi$ if 
\begin{enumerate}
\item $u \in C(\mathbf{R}^d, \mathcal{D}_X^{2\alpha }([0,T], \mathbf{R}))$ and $Du \in C(\mathbf{R}^d, \mathcal{D}_X^{2\alpha }([0,T],\mathcal{L}(\mathbf{R}^d,\mathbf{R}))$.
\item $(t,x)\rightarrow D^2u(t,x) $ is continuous.
\item For all $(t,x)\in [0,T]\times \mathbf{R}^d$, 
\[
 u(t,x) = \phi(x) +\int_0^t F ( x , u(r,x), D_x u(r,x)) d\mathbf{X}_r.
\]
\end{enumerate}  
\end{definition}
We denote $F_u ^j=D_uF^j$, $F_{x_i}^j= D_{x_i}F^j$, $F_{p_i}^j= D_{p_i}F^j$,  $F_x ^j=(F_{x_1}^j, \cdots , F_{x_d}^j)$ and $F_p^j =(F_{p_1}^j, \cdots , F_{p_d}^j)$.

The characteristic system associated with (\ref{transport}) is defined by the rough differential equation
\[
Y_t(y)=y + \int_0^t f(Y_s(y))d\mathbf{X}_s
\]
where $f : \mathbf{R}^d\times \mathbf{R} \times \mathbf{R}^d \rightarrow L(\mathbf{R}^n;\mathbf{R}^d\times \mathbf{R} \times \mathbf{R}^d) $ is given by
\begin{eqnarray*}
f(x,u,p)(v)& = & (-\sum_{j=1}^n F_p^j(x,u,p)v_j, \sum_{j=1}^n (F^j(x,u,p) -F^j_p(x,u,p) \cdot p)v_j,   \\ & & \sum_{j=1}^n( F_x^j(x,u,p)+ F^j_u(x,u,p)p) v_j  ).
\end{eqnarray*}

We observe that the characteristic system associated with (\ref{transport}) is written as
\begin{equation}\label{characteristic}
 \left \{
\begin{array}{lll}
    da_t & = & -\sum_{j=1}^n F_p^j (a_t, b_t ,c_t )d{\mathbf{X}}^j_t\\
 db_t & = & \sum_{j=1}^n \{ F^j (a_t, b_t ,c_t )-F_p^j (a_t, b_t ,c_t ) \cdot c_t \} d{\mathbf{X}}^j_t \\
dc_t & = & \sum_{j=1}^n \{ F_x^j (a_t, b_t ,c_t ) + F_u^j (a_t, b_t ,c_t ) c_t \} d{\mathbf{X}}^j_t
\end{array}
\right.
\end{equation}
where  $Y_t(y)=(a_t(y), b_t(y),c_t(y)) \in  \mathbf{R}^d\times \mathbf{R} \times \mathbf{R}^d $.
 
The structure of characteristics for equation (\ref{transport}), follows in a similar way that Theorem 4.5 of \cite{Castrequini Catuogno}. This is, if $u$ is a solution of (\ref{transport}) and $a_t$ solves the equation
\begin{equation}\label{E1}
 da_t  =  -\sum_{j=1}^n F_p^j (a_t, b_t ,c_t )d{\mathbf{X}}^j_t
\end{equation}
where $b_t := u(t, a_t)$ and $c_t := D_xu (t, a_t)$. Then we have that $(a_t, b_t , c_t)$ solves the characteristic system (\ref{characteristic}).

The existence and uniqueness of solution for a rough semilinear equation is given by the method of characteristics.

\begin{theorem}
The rough semilinear differential equation 
\begin{equation}\label{semilinear}
 \left \{
\begin{array}{lll}
    du(t,x) = \sum_{j=1}^n (D_xu(t,x)(P^j (x)) +Q^j(x, u(t,x)))d \mathbf{X}^j_t\\
 u(0,x) = \phi(x).
\end{array}
\right .
\end{equation}
has a unique solution given by
\[
u(t,x)=\tilde{b}_t\circ \tilde{a}_t^{-1}(x)
\]
where $\tilde{a}_t(x)= a_t(x, \phi(x),D\phi(x))$ and $\tilde{b}_t(x)= b_t(x, \phi(x),D\phi(x))$.
\end{theorem}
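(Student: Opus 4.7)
The plan is to verify both existence and uniqueness through two symmetric applications of the It\^o-Wentzell formula (Theorem \ref{Ito}) in its weak geometric form (\ref{Strato}), exploiting the special structure of the semilinear case. Since $F^j(x,u,p) = p\cdot P^j(x) + Q^j(x,u)$ satisfies $F^j_p = P^j$ and $F^j - F^j_p\cdot p = Q^j$, the first two equations of the characteristic system (\ref{characteristic}) decouple from the $c$-equation and read
\begin{align*}
d\tilde a_t(y) &= -\sum_{j=1}^n P^j(\tilde a_t(y))\,d\mathbf{X}^j_t, & \tilde a_0(y)&=y, \\
d\tilde b_t(y) &= \sum_{j=1}^n Q^j(\tilde a_t(y),\tilde b_t(y))\,d\mathbf{X}^j_t, & \tilde b_0(y)&=\phi(y).
\end{align*}
Under $P^j,Q^j\in C^3$, standard rough flow theory gives that $y\mapsto\tilde a_t(y)$ is a flow of $C^2$-diffeomorphisms whose inverse $\psi_t:=\tilde a_t^{-1}$ satisfies $d\psi_t(x) = \sum_{j} D\psi_t(x)P^j(x)\,d\mathbf{X}^j_t$ with $\psi_0=\mathrm{id}$; hence, for each fixed $x$, $\psi_t(x)$ is a controlled path with Gubinelli derivative $e_j\mapsto D\psi_t(x)P^j(x)$.

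For \emph{existence}, I would define $u(t,x):=\tilde b_t(\psi_t(x))$, which automatically gives $u(0,x)=\phi(x)$. The $C^2_x$-regularity comes from that of $\tilde b_t$ and $\psi_t$, and the controlled-path structure in $t$ from the chain rule (Lemma \ref{lemma} and its corollary). To verify the integral equation, apply (\ref{Strato}) to $g(t,y):=\tilde b_t(y)$ and $Z_t:=\psi_t(x)$; since $\tilde b_t$ is itself an $\mathbf{X}$-integral with integrand $h_b(s,y) = \sum_j Q^j(\tilde a_s(y),\tilde b_s(y))e_j^*$ and $\tilde a_s\circ\psi_s=\mathrm{id}$, the first It\^o-Wentzell integral reduces to $\sum_j\int_0^t Q^j(x,u(s,x))\,d\mathbf{X}^j_s$. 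The substitution rule (\ref{SR4}) applied to the inverse-flow RDE collapses the second integral to $\sum_j\int_0^t D_y\tilde b_s(\psi_s(x))D\psi_s(x)P^j(x)\,d\mathbf{X}^j_s$, and the spatial chain rule identifies $D_y\tilde b_s(\psi_s(x))D\psi_s(x) = D_xu(s,x)$. Summing gives precisely (\ref{semilinear}).

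For \emph{uniqueness}, given any solution $u$ I would set $v_t(y):=u(t,\tilde a_t(y))$ and apply (\ref{Strato}) in the dual direction with $g(t,x):=u(t,x)$ and $Z_t:=\tilde a_t(y)$. The integrand of $g$ is, by hypothesis, $h(s,x)=\sum_j(D_xu(s,x)P^j(x)+Q^j(x,u(s,x)))e_j^*$; the substitution rule (\ref{SR4}) applied to $d\tilde a_s = -\sum_j P^j(\tilde a_s)\,d\mathbf{X}^j_s$ cancels the $D_xu\cdot P^j$ contributions, leaving
\[
v_t(y) = \phi(y) + \sum_{j=1}^n \int_0^t Q^j(\tilde a_s(y),v_s(y))\,d\mathbf{X}^j_s.
\]
This is exactly the $\tilde b$-equation with initial datum $\phi(y)$, so by uniqueness of rough differential equations $v_t=\tilde b_t$, forcing $u(t,x)=\tilde b_t(\tilde a_t^{-1}(x))$.

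The hard part will not be the algebraic cancellations but the bookkeeping required to verify the regularity hypotheses of Theorem \ref{Ito} for the composite objects $\tilde b_t$ and $\psi_t$: one must show that both $(g,\partial_Xg)$ and $(Dg,\partial_XDg)$ are jointly controlled in time and $C^2$ in space. This rests on a Kunita-type smoothness and invertibility theorem for rough flows under $P^j,Q^j\in C^3$, standard in the semimartingale setting but requiring justification in the rough path framework. Correctly identifying the Gubinelli derivative of $\psi_t(x)$, which is used symmetrically in both halves of the argument, is the most delicate preliminary step.
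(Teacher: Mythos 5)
Your proposal is correct and follows essentially the same route as the paper: existence is obtained by applying the weak-geometric It\^o--Wentzell formula (\ref{Strato}) to $g=\tilde b$ and $Z=\tilde a_t^{-1}(x)$, then using the inverse-flow RDE and the substitution rule to identify the two resulting integrals, which is exactly the computation in the paper's proof. Your explicit uniqueness step (pulling a solution back along the characteristics and invoking uniqueness of the RDE system for $(\tilde a, v)$) is what the paper delegates to its preceding ``structure of characteristics'' discussion, and both arguments rest on the same unproved preliminary --- the form of the inverse-flow equation --- which you correctly flag as the delicate point.
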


\begin{proof}
We observe that $F^j(x,u,p)= P^j(x)\cdot p +Q^j(x,u)$, then
\[
F^j_p(x,u,p)= P^j(x) ~; ~F^j(x,u,p)-F^j_p(x,u,p) \cdot p=Q^j(x,u)  
\]
 for $j=1, \cdots , n$.

Thus, regarding the projection $(a_t, b_t)$ of $(a_t,b_t, c_t)$  the unique solution of  (\ref{characteristic}) satisfies,
\begin{equation}\label{characteristic1}
 \left \{
\begin{array}{lll}
    da_t & = & -\sum_{j=1}^n P^j (a_t )d\mathbf{X}^j_t\\
 db_t & = & \sum_{j=1}^n Q^j (a_t, b_t ) d\mathbf{X}^j_t 
\end{array}
\right .
\end{equation}
We observe that 
\begin{equation}
da^{-1}_r(x) =D_x\tilde{a}^{-1}_r(x)\sum_{j=1}^n P^j (x)d\mathbf{X}^j_r 
\end{equation}

By It\^o-Wentzell formula, 
\begin{eqnarray*}
\tilde{b}_t \circ \tilde{a}_t^{-1}(x) & = & \phi(x)+ \int_0^t  \sum_{j=1}^nQ^j (x, \tilde{b}_r(\tilde{a}_r^{-1}(x))d\mathbf{X}^j_r\\
& & + \int_0^t D_x\tilde{b}_r(\tilde{a}_r^{-1}(x)da^{-1}_r(x) \\
& = &  \phi(x)+ \sum_{j=1}^n  \int_0^t Q^j (x, \tilde{b}_r(\tilde{a}_r^{-1}(x))d\mathbf{X}^j_r \\
& & + \int_0^t D_x\tilde{b}_r(\tilde{a}_r^{-1}(x))D_x\tilde{a}^{-1}_r(x)\sum_{j=1}^n P^j (x)d\mathbf{X}^j_r\\
& = & \phi(x)+ \sum_{j=1}^n \int_0^t   (Q^j (x, \tilde{b}_r \circ \tilde{a}_r^{-1}(x) \\
& & + D_x\tilde{b}_r\circ \tilde{a}_r^{-1}(x) P^j (x))d\mathbf{X}^j_r.
\end{eqnarray*}
\end{proof}

\section{Appendix} \label{A}

\subsection{\bf  Proof of (\ref{a1})}\label{A1}
We claim that 
\begin{equation} \label{aa1}
Z_{st} \otimes Z_{st} = (\partial_XZ_s \otimes \partial_XZ_s) X_{st} \otimes X_{st} + R^{10}_{st} 
\end{equation}
where $| R^{10}_{st} |\leq C|t-s|^{3\alpha}$.
Indeed, as $(Z, \partial_X Z)\in  \mathcal{D}_X^{2\alpha}([0,T],U)$ we have that 
\[
Z_{st}= \partial_X Z_z X_{st} + R^Z_{st}.
\]
Then
\begin{equation}
R^{10}_{st}= R^Z_{st}\otimes R^Z_{st} + R^Z_{st}\otimes \partial_XZ_s X_{st}+\partial_XZ_s X_{st}\otimes R^Z_{st}.
\end{equation} 
that satisfies $| R^{10}_{st} | \leq C|t-s|^{3\alpha}$. This proves the claim.

Applying $\frac{1}{2}D^2g(t, Z_s)$ into (\ref{aa1}), we have 
\begin{eqnarray}
\frac{1}{2}D^2g(t,Z_s)Z_{st}\otimes Z_{st} & = & D^2g(t,Z_s)(\partial_XZ_s \otimes \partial_X Z_s )\frac{1}{2}X_{st}\otimes X_{st} +\nonumber \\ & & + \frac{1}{2}D^2g(t, Z_s) R^{10}_{st} \label{eqa1}
\end{eqnarray}
Since 
\[
X_{st} \otimes X_{st}=\frac{1}{2}[\mathbf{X}]_{st}+ Sym \mathbb{X}_{st}
\]
and the  symmetry of $D^2g(t, Z_s)$, we have
\begin{eqnarray}\label{sim1}
D^2g(t,Z_s)(\partial_XZ_s \otimes \partial_X Z_s )\frac{1}{2}X_{st}\otimes X_{st}& = &D^2g(t,Z_s)(\partial_XZ_s \otimes \partial_X Z_s )\mathbb{X}_{st}+ \nonumber \\
 & &  +  \frac{1}{2}D^ 2g(s, Z_s)\partial_XZ_s \otimes \partial_X Z_s  \nonumber \\ & &  [ \mathbf{X} ]_{st} 
\end{eqnarray}

Substituting (\ref{sim1}) into (\ref{eqa1}) yields
\begin{eqnarray}
\frac{1}{2}D^2g(t,Z_s)Z_{st}\otimes Z_{st} & = & D^2g(t,Z_s)(\partial_XZ_s \otimes \partial_X Z_s )\mathbb{X}_{st} +\nonumber \\& & + \frac{1}{2}D^ 2g(s, Z_s)\partial_XZ_s \otimes \partial_X Z_s  [ \mathbf{X} ]_{st} +
\\ & & + \frac{1}{2}D^2g(t, Z_s) R^{10}_{st}. \label{eqa2}
\end{eqnarray}
Using that $D^2g \in C(U, \mathcal{C}^{\alpha}([0,T], W))$ we have that  
\begin{equation} \label{eqa3} 
R^ {11}_{st}:= (D^2g(t,Z_s)-D^2g(s,Z_s))(\partial_XZ_s \otimes \partial_X Z_s )\mathbb{X}_{st}
\end{equation}
Thus
\begin{eqnarray}
\frac{1}{2}D^2g(t,Z_s)Z_{st}\otimes Z_{st} & = & D^2g(s,Z_s)(\partial_XZ_s \otimes \partial_X Z_s )\mathbb{X}_{st} + \nonumber \\
& &+ \frac{1}{2}D^ 2g(s, Z_s)\partial_XZ_s \otimes \partial_X Z_s  [ \mathbf{X} ]_{st} + R^1_{st}
\end{eqnarray}
where $R^1_{st}= \frac{1}{2}D^2g(t, Z_s) R^{10}_{st} + R^ {11}_{st} $. Clearly, $|R^ {11}_{st}| \leq C|t-s|^ {3\alpha}$.

\subsection{\bf  Proof of (\ref{a2})}\label{A2}

We can take $\partial_X Dg(\cdot, x)=D\partial_Xg(\cdot, x)^*$, see Appendix \ref{A3}. Then
\begin{eqnarray}
Dg(t,x)(u)& = & Dg(s, x)(u) + \partial_X Dg(s,x)(u) X_{st} + R^{\prime}_{st}(u) \nonumber \\
& = & Dg(s, x)(u) + D\partial_X g(s,x)^* (u) X_{st} + R^{\prime}_{st}(u) \nonumber \\
& = & Dg(s, x)(u) + D\partial_X g(s,x) X_{st}(u) + R^{\prime}_{st}(u) \label{a21}
\end{eqnarray}
where $|R^{\prime}_{st}|\leq |t-s|^{2\alpha}$.

Substituting $u=Z_{st} $ into  (\ref{a21}) we obtain 
\begin{eqnarray}
Dg(t,x)Z_{st}& = & Dg(s, x)Z_{st} + D\partial_X g(s,x) X_{st}(Z_{st}) + R^{\prime}_{st}(Z_{st}) \nonumber \\
& = & Dg(s, x)Z_{st}+ D\partial_X g(s,x) X_{st}(\partial_XZ_s X_{st}) + R^{2}_{st} \nonumber \\
& = & Dg(s,Z_s)Z_{st} +(Dh(s,Z_s) \circ \partial_X Z_s )X_{st} \otimes X_{st} + \nonumber \\ & & +R^2_{st} 
\end{eqnarray}
where $R^2_{st}= R^{\prime}_{st}(Z_{st})+ D\partial_X g(s,x) X_{st}(R^ Z_{st}) $.

We have that $ |R^2_{st}| \leq C|t-s|^{3\alpha}$, because
\begin{eqnarray}
|R^2_{st}|  & \leq & |R^{\prime}_{st}(Z_{st})|+ |D\partial_X g(s,x) X_{st}(R^ Z_{st})| \nonumber \\ 
& \leq  & |R^ {\prime}_{st}| |Z_{st}| + \sup_{s,x}  |D\partial_X g(s,x)| |X_{st}| |R^ Z_{st}| \nonumber \\
& \leq & C|t-s|^{3\alpha} +\sup_{s,x}  |D\partial_X g(s,x)| C |t-s|^{3\alpha}.
\end{eqnarray}

\subsection{\bf  Proof of (\ref{com})}\label{A3}
We claim that,  
\begin{equation}
Dg(t,x)\cdot u=\int_0^tDh(r,x)\cdot u d\mathbf{X}_r
\end{equation}
for all $(t, x)\in [0,T] \times U$ and $u \in U$.

In fact, 
\begin{eqnarray}\label{a31}
\|\frac{g(t, x+\varepsilon u )-g(t,x)}{\varepsilon}-\int_0^tDh(r,x) \cdot u d\mathbf{X}_r\|_{\mathcal{D}_X^{2\alpha}}& = & \|\frac{1}{\varepsilon}\int_0^t (h(r,x+\varepsilon u)-h(r,x))d\mathbf{X}_r - \nonumber \\
& & -\int_0^tDh(r,x) \cdot u d\mathbf{X}_r\|_{\mathcal{D}_X^{2\alpha}} \nonumber \\
& = &\| \int_0^t   (\int_0^1 Dh(r, x+s\varepsilon u)\cdot u ds -   \nonumber  \\ & & -Dh(r,x)\cdot  u ) d\mathbf{X}_r\|_{\mathcal{D}_X^{2\alpha}}
\end{eqnarray}
and we have that the following inequality  holds
\begin{eqnarray}
\|  \int_0^1 Dh(r, x+s\varepsilon u)\cdot u ds -Dh(r,x)\cdot  u \|_{\mathcal{D}_X^{2\alpha}} & \leq & \int_0^1\| Dh(r, x+s\varepsilon u)\cdot u -\nonumber \\ & &-Dh(r,x)\cdot  u \|_{\mathcal{D}_X^{2\alpha}} ds. \label{a32}
\end{eqnarray} 
Combining (\ref{a31}), (\ref{a32}), $(Dh, \partial_X Dh) \in C(U, \mathcal{D}^{2\alpha}([0,T],L(U,L(V,W))))$ and the stability of the rough integral (see \cite{Friz Hairer}, Theorem 4.17) we obtain that 
\begin{equation}
\lim_{\varepsilon \rightarrow 0}\|\frac{g(t, x+u )-g(t,x)}{\varepsilon}-\int_0^tDh(r,x) \cdot u d\mathbf{X}_r\|_{\mathcal{D}_X^{2\alpha}}=0
\end{equation}
Thus $Dg(t,x)\cdot u=\int_0^tDh(r,x)\cdot u d\mathbf{X}_r$ as claimed.

We can choose the Gubinelli derivative $\partial_X Dg(\cdot, x)$ as $D\partial_Xg(\cdot, x)^*$. This implies that
\begin{equation}
((\partial_X Dg)(s, Z_s)\cdot \partial_XZ_s)\mathbb{X}_{st}  =  ((D \partial_X g)(s, Z_s)\circ  \partial_XZ_s)\mathbb{X}_{st}^*.
\end{equation}

\section*{Acknowledgements}

This study was financed in part by the Coordena\c{c}\~ao de Aperfei\c{c}oamento de Pessoal de N\'ivel Superior – Brasil (CAPES) – Finance Code 001, (Process number: Math-Amsud- 88887.333669/2019-00) -- on behalf of the first-named author -- and by grant 2020/04426-6, S\~ao Paulo Research Foundation (FAPESP) -- on behalf of the second-named author.

%===============================================================================
% Bibliografia
%-------------------------------------------------------------------------------
\section{References}

\end{document}